\documentclass[12pt]{amsart}
\usepackage{amssymb}
\newcommand{\deleted}[1]{}
\newcommand{\delete}[1]{}
\newcommand{\mynotes}[1]{}
\newcommand\notes[1]{}
\usepackage{amsfonts}
\usepackage{stmaryrd}
\usepackage{}
\usepackage{amsfonts}      
\usepackage{txfonts}
\usepackage{amssymb}
\usepackage{eucal}
\usepackage{graphicx}
\usepackage{amsmath}
\usepackage{amscd}
\usepackage[all]{xy}           
\usepackage[active]{srcltx} 

\usepackage{amsfonts,latexsym}
\usepackage{xspace}
\usepackage{epsfig}
\usepackage{float}
\usepackage{color}
\usepackage{fancybox}
\usepackage{colordvi}
\usepackage{multicol}
\usepackage{colordvi}
\usepackage{stmaryrd}
\usepackage[normalem]{ulem}
\usepackage[colorlinks,final,backref=page,hyperindex,hypertex]{hyperref}
\usepackage[hypertex]{hyperref} 

\topmargin -.8cm \textheight 22.8cm \oddsidemargin 0cm \evensidemargin -0cm \textwidth 16.3cm

\newcommand\changed[1]{#1}
\changed{}

\newtheorem{theorem}{Theorem}[section]
\newtheorem{lemma}[theorem]{Lemma}
\newtheorem{coro}[theorem]{Corollary}

\theoremstyle{definition}

\newtheorem{remark}[theorem]{Remark}

\deleted{\newtheorem{theorem}{Theorem}[section]
\newtheorem{prop-def}{Proposition-Definition}[section]
\newtheorem{coro-def}{Corollary-Definition}[section]

}


\newcommand{\nc}{\newcommand}
\nc{\tred}[1]{\textcolor{red}{#1}} \nc{\tblue}[1]{\textcolor{blue}{#1}} \nc{\tgreen}[1]{\textcolor{green}{#1}} \nc{\tpurple}[1]{\textcolor{purple}{#1}} \nc{\btred}[1]{\textcolor{red}{\bf #1}} \nc{\btblue}[1]{\textcolor{blue}{\bf #1}} \nc{\btgreen}[1]{\textcolor{green}{\bf #1}} \nc{\btpurple}[1]{\textcolor{purple}{\bf #1}}

\renewcommand{\Bbb}{\mathbb}


\newcommand{\efootnote}[1]{}

\renewcommand{\textbf}[1]{}

\nc{\mlabel}[1]{\label{#1}}  
\nc{\mcite}[1]{\cite{#1}}  
\nc{\mref}[1]{\ref{#1}}  
\nc{\mbibitem}[1]{\bibitem{#1}} 

\delete{
\nc{\mlabel}[1]{\label{#1}  
{\hfill \hspace{1cm}{\bf{{\ }\hfill(#1)}}}}
\nc{\mcite}[1]{\cite{#1}{{\bf{{\ }(#1)}}}}  
\nc{\mref}[1]{\ref{#1}{{\bf{{\ }(#1)}}}}  
\nc{\mbibitem}[1]{\bibitem[\bf #1]{#1}} 
}

\renewcommand\geq{\geqslant}
\renewcommand\leq{\leqslant}

\renewcommand\bar[1]{\overline{#1}}
\renewcommand\tilde[1]{\widetilde{#1}}



\nc{\rbw}{\mathfrak{R}} \nc{\brp}{\mathrm{brp}} \nc{\lead}{\mathrm{Lead}} \nc{\Id}{\mathrm{Id}} \nc{\Irr}{\mathrm{Irr}} \nc{\vx}{\sigma} \nc{\vy}{\tau} \nc{\dvx}{\sigma^{(1)}} \nc{\dvy}{\tau^{(1)}} \nc{\done}{\vep} \nc{\citep}[1]{\cite{#1}} \nc{\wt}{\mathrm{wt}} \nc{\bre}[1]{|#1|} \nc{\mapmonoid}{\frakM} \nc{\disjoint}{\frakM'}
\nc{\ncpoly}[1]{\langle #1\rangle}  
\nc{\mapm}[1]{\lfloor\!|{#1}|\!\rfloor}
\nc{\diff}[1]{{}^\NC\{ #1 \}} \nc{\disj}[1]{\{{#1}\}'} \nc{\mdisj}[1]{\frakM'(#1)} \nc{\brho}{\bar{\rho}} \nc{\om}{\bar{\frakm}} \nc{\frakn}{\mathfrak n} \nc{\ddeg}[1]{^{(#1)}} \nc{\opset}{X} \nc{\genset}{{Z}} \nc{\NC}{\mathrm{{NC}}} \nc{\leaf}{\mathrm{leaf}} \nc{\twig}{\mathrm{twig}} \nc{\fe}{\mathrm{fl}} \nc{\munderline}[1]{#1} \nc{\bo}{o} \nc{\dep}{\mathrm{depth}} \nc{\ofe}{\mathrm{ofl}} \nc{\dfe}{\mathrm{dfe}} \nc{\fex}{\mathrm{fex}} \nc{\dl}{\mathrm{dlex}} \nc{\db}{\mathrm{db}} \nc{\lex}{\mathrm{lex}} \nc{\clex}{\mathrm{clex}} \nc{\dgp}{\mathrm{dgp}} \nc{\dgx}{\mathrm{dgx}} \nc{\br}{\mathrm{br}} \nc{\obd}{\mathrm{odb}} \nc{\ob}{\mathrm{ob}}
\nc{\loc}{location\xspace}
\nc{\occ}{occurrence\xspace}
\nc{\occs}{occurrences\xspace}
\nc{\pla}{placement\xspace}
\nc{\plas}{placements\xspace}

\nc{\bin}[2]{ (_{\stackrel{\scs{#1}}{\scs{#2}}})}  
\nc{\binc}[2]{ \left (\!\! \begin{array}{c} \scs{#1}\\
    \scs{#2} \end{array}\!\! \right )}  
\nc{\bincc}[2]{  \left ( {\scs{#1} \atop
    \vspace{-1cm}\scs{#2}} \right )}  
\nc{\bs}{\bar{S}} \nc{\cosum}{\sqsubset} \nc{\la}{\longrightarrow} \nc{\rar}{\rightarrow} \nc{\dar}{\downarrow} \nc{\dprod}{**} \nc{\dap}[1]{\downarrow \rlap{$\scriptstyle{#1}$}} \nc{\md}{\mathrm{dth}} \nc{\uap}[1]{\uparrow \rlap{$\scriptstyle{#1}$}} \nc{\defeq}{\stackrel{\rm def}{=}} \nc{\disp}[1]{\displaystyle{#1}} \nc{\dotcup}{\ \displaystyle{\bigcup^\bullet}\ } \nc{\gzeta}{\bar{\zeta}} \nc{\hcm}{\ \hat{,}\ } \nc{\hts}{\hat{\otimes}} \nc{\barot}{{\otimes}} \nc{\free}[1]{\bar{#1}} \nc{\uni}[1]{\tilde{#1}} \nc{\hcirc}{\hat{\circ}} \nc{\leng}{\ell} \nc{\lleft}{[} \nc{\lright}{]} \nc{\lc}{\lfloor} \nc{\rc}{\rfloor}
\nc{\lb}{[} 
\nc{\rb}{]} 
\nc{\curlyl}{\left \{ \begin{array}{c} {} \\ {} \end{array}
    \right.  \!\!\!\!\!\!\!}
\nc{\curlyr}{ \!\!\!\!\!\!\!
    \left. \begin{array}{c} {} \\ {} \end{array}
    \right \} }
\nc{\longmid}{\left | \begin{array}{c} {} \\ {} \end{array}
    \right. \!\!\!\!\!\!\!}
\nc{\onetree}{\bullet} \nc{\ora}[1]{\stackrel{#1}{\rar}}
\nc{\ola}[1]{\stackrel{#1}{\la}}
\nc{\ot}{\otimes} \nc{\mot}{{{\boxtimes\,}}} \nc{\otm}{\overline{\boxtimes}} \nc{\sprod}{\bullet} \nc{\scs}[1]{\scriptstyle{#1}} \nc{\mrm}[1]{{\rm #1}} \nc{\msum}{\sum\limits}
\nc{\margin}[1]{\marginpar{\rm #1}}   
\nc{\dirlim}{\displaystyle{\lim_{\longrightarrow}}\,} \nc{\invlim}{\displaystyle{\lim_{\longleftarrow}}\,} \nc{\mvp}{\vspace{0.3cm}} \nc{\tk}{^{(k)}} \nc{\tp}{^\prime} \nc{\ttp}{^{\prime\prime}} \nc{\svp}{\vspace{2cm}} \nc{\vp}{\vspace{8cm}} \nc{\proofbegin}{\noindent{\bf Proof: }}
\nc{\proofend}{$\blacksquare$ \vspace{0.3cm}}
\nc{\modg}[1]{\!<\!\!{#1}\!\!>}
\nc{\intg}[1]{F_C(#1)} \nc{\lmodg}{\!<\!\!} \nc{\rmodg}{\!\!>\!} \nc{\cpi}{\widehat{\Pi}}
\nc{\sha}{{\mbox{\cyr X}}}  
\nc{\shap}{{\mbox{\cyrs X}}} 
\nc{\shpr}{\diamond}    
\nc{\shp}{\ast} \nc{\shplus}{\shpr^+}
\nc{\shprc}{\shpr_c}    
\nc{\msh}{\ast} \nc{\zprod}{m_0} \nc{\oprod}{m_1} \nc{\vep}{\varepsilon} \nc{\labs}{\mid\!} \nc{\rabs}{\!\mid}
\nc{\astarrow}{\overset{\raisebox{-3pt}{$\ast$}}{\rightarrow}}
\nc{\lastarrow}{\overset{\raisebox{-3pt}{$\ast$}}{\leftarrow}}
\nc{\mpu}{u^{\ast}}
\nc{\mpv}{v^{\ast}}
\nc{\mpw}{w^{\ast}}
\nc{\mpx}{x^{\ast}}
\nc{\dps}{\dotplus}


\nc{\dth}{d} \nc{\mmbox}[1]{\mbox{\ #1\ }} \nc{\fp}{\mrm{FP}} \nc{\rchar}{\mrm{char}} \nc{\Fil}{\mrm{Fil}} \nc{\Mor}{Mor\xspace} \nc{\gmzvs}{gMZV\xspace} \nc{\gmzv}{gMZV\xspace} \nc{\mzv}{MZV\xspace} \nc{\mzvs}{MZVs\xspace} \nc{\Hom}{\mrm{Hom}} \nc{\id}{\mrm{id}} \nc{\im}{\mrm{im}} \nc{\incl}{\mrm{incl}} \nc{\map}{\mrm{Map}} \nc{\mchar}{\rm char} \nc{\nz}{\rm NZ} \nc{\supp}{\mathrm Supp}
\nc{\mo}{\mathbf o}
\nc{\pl}{\mathfrak{p}}

\nc{\Alg}{\mathbf{Alg}} \nc{\Bax}{\mathbf{Bax}} \nc{\bff}{\mathbf f} \nc{\bfk}{{\bf k}} \nc{\bfone}{{\bf 1}} \nc{\bfx}{\mathbf x} \nc{\bfy}{\mathbf y}
\nc{\base}[1]{\bfone^{\otimes ({#1}+1)}} 
\nc{\Cat}{\mathbf{Cat}} \delete{}
\nc{\detail}{\marginpar{\bf More detail}
    \noindent{\bf Need more detail!}
    \svp}
\nc{\Int}{\mathbf{Int}} \nc{\Mon}{\mathbf{Mon}}
\nc{\rbtm}{{shuffle }} \nc{\rbto}{{Rota-Baxter }} \nc{\remarks}{\noindent{\bf Remarks: }} \nc{\Rings}{\mathbf{Rings}} \nc{\Sets}{\mathbf{Sets}}

\nc{\BA}{{\Bbb A}} \nc{\CC}{{\Bbb C}} \nc{\DD}{{\Bbb D}} \nc{\EE}{{\Bbb E}} \nc{\FF}{{\Bbb F}} \nc{\GG}{{\Bbb G}} \nc{\HH}{{\Bbb H}} \nc{\LL}{{\Bbb L}} \nc{\NN}{{\Bbb N}} \nc{\KK}{{\Bbb K}} \nc{\QQ}{{\Bbb Q}} \nc{\RR}{{\Bbb R}} \nc{\TT}{{\Bbb T}} \nc{\VV}{{\Bbb V}} \nc{\ZZ}{{\Bbb Z}}


\nc{\cala}{{\mathcal A}} \nc{\calb}{{\mathcal B}}
\nc{\calc}{{\mathcal C}} \nc{\cald}{{\mathcal D}} \nc{\cale}{{\mathcal E}} \nc{\calf}{{\mathcal F}} \nc{\calg}{{\mathcal G}} \nc{\calh}{{\mathcal H}} \nc{\cali}{{\mathcal I}} \nc{\call}{{\mathcal L}} \nc{\calm}{{\mathcal M}} \nc{\caln}{{\mathcal N}} \nc{\calo}{{\mathcal O}} \nc{\calp}{{\mathcal P}} \nc{\calr}{{\mathcal R}} \nc{\cals}{{\mathcal S}} \nc{\calt}{{\mathcal T}} \nc{\calw}{{\mathcal W}} \nc{\calk}{{\mathcal K}} \nc{\calx}{{\mathcal X}} \nc{\CA}{\mathcal{A}}

\nc{\fraka}{{\mathfrak a}} \nc{\frakA}{{\mathfrak A}} \nc{\frakb}{{\mathfrak b}} \nc{\frakB}{{\mathfrak B}} \nc{\frakZ}{{\mathfrak Z}}\nc{\frakD}{{\mathfrak D}} \nc{\frakH}{{\mathfrak H}} \nc{\frakM}{{\mathfrak M}} \nc{\bfrakM}{\overline{\frakM}} \nc{\frakm}{{\mathfrak m}} \nc{\frakP}{{\mathfrak P}} \nc{\frakN}{{\mathfrak N}} \nc{\frakp}{{\mathfrak p}} \nc{\frakS}{{\mathfrak S}} \nc{\frakx}{{\mathfrak x}} \nc{\ox}{\bar{\frakx}} \nc{\frakX}{{\mathfrak X}} \nc{\fraky}{{\mathfrak y}} \nc\dop{\delta}
\nc{\Reduce}{{\rm Red}}

\nc{\bfs}{{\bf s}}
\font\cyr=wncyr10 \font\cyrs=wncyr7
\nc{\redt}[1]{\textcolor{red}{#1}}

\nc{\li}[1]{\textcolor{red}{\tt Li:#1}} \nc{\lp}[1]{\textcolor{blue}{\tt Lei:#1}} \nc{\mb}[1]{\textcolor{purple}{\tt Ma:#1}}


\begin{document}
\title[Restricted decomposition formulas of MZVs]{Applications of shuffle product to restricted decomposition formulas for multiple zeta values}

\author{Li Guo}
\address{
    Department of Mathematics and Computer Science,
         Rutgers University,
         Newark, NJ 07102, USA}
\email{liguo@rutgers.edu}

\author{Peng Lei}
\address{Department of Mathematics,
    Lanzhou University,
    Lanzhou, Gansu 730000, China}
\email{leip@lzu.edu.cn}

\author{Biao Ma}
\address{Cuiying Honors College and Department of Mathematics,
    Lanzhou University,
    Lanzhou, Gansu 730000, China}
\email{mab2010@lzu.edu.cn}

\hyphenpenalty=8000
\date{\today}

\begin{abstract}
In this paper we obtain a recursive formula for the shuffle product and apply it to derive two restricted decomposition formulas for multiple zeta values (MZVs).
The first formula generalizes the decomposition formula of Euler and is similar to the restricted formula of Eie and Wei for MZVs with one strings of 1's. The second formula generalizes the previous results to the product of two MZVs with one and two strings of 1's respectively.
\end{abstract}

\delete{
\begin{keyword}
Multiple zeta values, Euler's decomposition formula, shuffle product, restricted decomposition formula
\end{keyword}
}
\maketitle

\tableofcontents

\hyphenpenalty=8000 \setcounter{section}{0}


\section{Introduction}
A {\bf multiple zeta value (MZV)} is the special value of the complex valued function
$$\zeta(s_1,\cdots,s_k)=\sum_{n_1>\cdots >n_k\geq 1} \frac{1}{n_1^{s_1}\cdots n_k^{s_k}}$$
at positive integers $s_1,\cdots,s_k$ with $s_1\geq 2$ to ensure the convergence of the nested sum. MZVs are natural generalizations of the Riemann zeta values $\zeta(s)$ to multiple variables.
They were introduced in the 1990s with motivations from number theory~\mcite{Za}, combinatorics~\mcite{Ho0} and quantum field theory~\mcite{BK}. Since then the subject has turned into an active area of research that involves many areas of mathematics and mathematical physics~\mcite{Ca1}. Its number theoretic significance can be seen from the recent theorem of Brown~\mcite{Brn,Za3} that all periods of mixed Tate motives unramified over $\ZZ$ are $\QQ[\frac{1}{2\pi i}]$-linear combinations of MZVs (See also \mcite{Go,GM,Te}).
\smallskip

A major goal on MZVs is to determine all algebraic relations among MZVs. In the two variable case, this problem has been studied over two hundred years ago by Goldbach and Euler ~\mcite{Eu,Sa}.
Among Euler's major discoveries are his {\bf
sum formula} $$ \sum_{i=2}^{n-1} \zeta(i,n-i)=\zeta(n)$$ expressing
one Riemann zeta values as a sum of double zeta values and the
{\bf decomposition formula}
\begin{equation}
\zeta(r) \zeta(s)
 = \sum_{k=0}^{s-1} \binc{r+k-1}{k} \zeta(r+k,s-k)
+ \sum_{k=0}^{r-1} \binc{s+k-1}{k} \zeta(s+k,r-k), \quad r,s\geq 2,
\mlabel{eq:euler}
\end{equation}
expressing the product of two Riemann zeta values as a sum of double zeta values.

Soon after MZVs were introduced, Euler's sum formula was generalized to MZVs~\mcite{Ho0,Gr,Za2} as the well-known sum formula, followed by quite a few other generalizations~\mcite{Br,ELO,GX2,GZ,HO,KTY,Oh,OO,OZa,OZ}.
Generalizations of Euler's decomposition formula to MZVs came much slowly even though Euler's formula had been revisited and found applications in modular forms~\mcite{BBG,3BL2,GKZ}.
Euler's decomposition formula was generalized in~\mcite{Br2,Zh} to the product of two $q$-zeta values. More recently, Euler's decomposition formula for multi-variable MZVs were obtained in~\mcite{EW,GX3}. In~\mcite{GX3}, Euler's decomposition formula is generalized to the product of any two MZVs by the algebraic method of double shuffle product. By an analytic method, Euler's decomposition formula is generalized in~\mcite{EW} to the product of two MZVs of the form
$\zeta(m,\{1\}^k)$ with one string of 1's: $\{1\}^k:=\underbrace{1,\cdots,1}_{k \text{ terms }}$. A formula for MZVs with strings of 1's is often called a restricted decomposition formula (see Section~\mref{ss:general} for a general discussion on restricted decomposition formulas).

In this paper, we give a new recursive formula (Theorem~\mref{thm:rec}) for the shuffle product and apply it to obtain two restricted decomposition formulas. One is for MZVs with one strings of 1's similar to the one in~\mcite{EW}. The other one is for MZVs with one and two strings of 1's. The following are the two theorems on MZVs. We will use the convention that, if $i=0$, then the string $\alpha_1+m,\cdots,\alpha_{i},\alpha_{{i}+1}$ is taken to be $\alpha_1+m$.

\begin{theorem}For positive integers~$m,n,j$ and $k$,~we have
\allowdisplaybreaks{\begin{eqnarray}
&&\zeta(m+1,{\{1\}}^{j-1})\zeta(n+1,{\{1\}}^{k-1})\notag\\
&=&\sum_{
\substack{|\alpha|=n-n_1+j_1+1\\
j_1+j_2=j,j_i\geq 0
\\ 0\leq n_1 \leq n}} \bincc{m-1+n_1}{m-1}\bincc{j_2+k-1}{k-1}\zeta(\alpha_1+m+n_1,\alpha_2,\cdots,\alpha_{j_1},\alpha_{j_1+1},{\{1\}}^{j_2+k-1})\notag\\
&&+\sum_{
\substack{|\beta|=m_2+k-t+1\\
m_1+m_2=m-1,m_i\geq 0
\\ 0\leq t \leq k-1}} \bincc{m_1+n-1}{n-1}\bincc{j+t}{j}\zeta(\beta_1+m_1+n,\beta_2,\cdots,\beta_{k-t},\beta_{k-t+1}+1,{\{1\}}^{j+t-1}),\notag
\end{eqnarray}}
where~$ |\alpha|:=\alpha_1+\alpha_2+\cdots+\alpha_{j_1}+\alpha_{j_1+1}$ with $\alpha_i\geq 1$ and $|\beta|:=\beta_1+\beta_2+\cdots+\beta_{k-t}+\beta_{k-t+1}$ with $\beta_i\geq 1.$
\mlabel{thm:01.01}
\end{theorem}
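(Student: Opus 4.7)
The plan is to exploit the iterated integral representation of MZVs to rewrite the product on the left as a shuffle product of words in the alphabet $\{x_0, x_1\}$, and then to apply the recursive shuffle formula of Theorem~\mref{thm:rec} to expand this shuffle in closed form. Under the standard encoding $\zeta(s_1, \ldots, s_p) \leftrightarrow x_0^{s_1-1}x_1 \cdots x_0^{s_p-1}x_1$, the two factors become $\zeta(x_0^m x_1^j)$ and $\zeta(x_0^n x_1^k)$, and the shuffle relation for iterated integrals gives
\begin{equation*}
\zeta(m+1,\{1\}^{j-1})\,\zeta(n+1,\{1\}^{k-1}) = \zeta\bigl(x_0^m x_1^j \shp x_0^n x_1^k\bigr).
\end{equation*}
Thus the theorem reduces to computing this shuffle product and translating each resulting word back into compositional notation.

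I would then apply Theorem~\mref{thm:rec} to $x_0^m x_1^j \shp x_0^n x_1^k$. The recursive formula naturally splits the shuffle into two cases, according to whether the first ``block'' of the shuffled word is anchored at the last $x_0$ of the first factor or of the second factor. In the first case, all $m$ initial $x_0$'s of the first factor appear in the prefix together with $n_1$ of the $x_0$'s of the second factor ($0 \leq n_1 \leq n$), and the number of valid interleavings compatible with the anchoring is $\binom{m-1+n_1}{m-1}$; this produces the leading binomial in the first sum. The symmetric case, anchored on the second factor, gives the coefficient $\binom{m_1+n-1}{n-1}$ of the second sum. Having fixed the prefix, the remaining tail is itself a shuffle, whose expansion produces a composition $(\alpha_1, \alpha_2, \ldots, \alpha_{j_1+1})$ of weight $n - n_1 + j_1 + 1$ followed by a trailing string of $1$'s of length $j_2 + k - 1$, where $j_1 + j_2 = j$; the binomial $\binom{j_2 + k - 1}{k - 1}$ counts the interleavings of the $x_1$'s in the trailing block, and an analogous count produces $\binom{j + t}{j}$ in the symmetric case.

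Translating each resulting word $x_0^{\alpha_1+m+n_1-1}x_1 \cdot x_0^{\alpha_2-1}x_1 \cdots x_0^{\alpha_{j_1+1}-1}x_1 \cdot x_1^{j_2+k-1}$ back into MZV notation via $x_0^{s-1}x_1 \leftrightarrow s$ yields the first sum of the theorem, and the symmetric procedure (with the ``$+1$'' on $\beta_{k-t+1}$ reflecting the fact that in the second case the second factor's own $x_1$ is pinned to close the $(k-t+1)$-th component rather than starting the trailing string) yields the second sum. The convention on the $i=0$ case handles precisely the terms where the interior composition collapses to a single entry.

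The main obstacle is the delicate combinatorial bookkeeping: one must simultaneously track the distribution of the first factor's $x_1$'s between the interior components and the trailing $1$'s (the parameter split $j_1+j_2=j$), the number of second-factor $x_0$'s migrated into the prefix (the parameter $n_1$), and the resulting constraint $|\alpha|=n-n_1+j_1+1$, and verify that the recursive formula accounts for every shuffled word exactly once without double counting. Once these indices are consistently matched to the output of Theorem~\mref{thm:rec}, a sanity check at the base case $m=n=j=k=1$, recovering $\zeta(2)^2 = 2\zeta(2,2)+4\zeta(3,1)$, confirms the coefficients.
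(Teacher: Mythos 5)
Your proposal follows essentially the same route as the paper: encode the two factors as $x_0^mx_1^j$ and $x_0^nx_1^k$, use the fact that $\zeta^{\shap}$ is an algebra homomorphism for the shuffle product, expand $x_0^mx_1^j\shap x_0^nx_1^k$ via the recursive formula of Theorem~\mref{thm:rec} (with the binomial coefficients arising from the elementary shuffles $x_0^a\shap x_0^b$ and $x_1^a\shap x_1^b$), and translate the resulting words back into MZVs. The index bookkeeping you outline matches the paper's computation, so the approach is correct and not materially different.
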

The appearance of the formula is different from the one in~\mcite{EW}. They should agree with each other after rearrangement of terms since both formulas are based the shuffle product.

When $j=k=1$, we derive Euler's decomposition formula (see Section~\mref{ss:co:01.01}).
\begin{coro}
For positive integers~$m,n$, we get
\begin{equation}
\zeta(m+1)\zeta(n+1)=\sum_{j=1}^{n+1}\bincc{m+n-j+1}{m}\zeta(m+n+2-j,j)+\sum_{j=1}^{
m+1}
\bincc{m+n-j+1}{n}\zeta(m+n+2-j,j).
\mlabel{eq:co:01.01}
\end{equation}\mlabel{col:01.01}
\end{coro}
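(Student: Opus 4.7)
The plan is to substitute $j=k=1$ directly into Theorem~\mref{thm:01.01} and then identify each of the two resulting sums with one of the two sums on the right-hand side of~(\mref{eq:co:01.01}). The only combinatorial input beyond this bookkeeping is the hockey-stick identity $\sum_{i=0}^{N}\bincc{a+i}{a}=\bincc{N+a+1}{a+1}$.

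After setting $j=k=1$, the constraint $j_1+j_2=1$ forces $(j_1,j_2)\in\{(1,0),(0,1)\}$ in the first sum, while the constraint $0\leq t\leq k-1=0$ forces $t=0$ in the second sum. I split the first sum according to the two subcases. For $(j_1,j_2)=(1,0)$ the trailing $\{1\}$-string is empty and the multiple zeta value is $\zeta(\alpha_1+m+n_1,\alpha_2)$ with $\alpha_1+\alpha_2=n-n_1+2$. Introducing $Q:=\alpha_2$, the first entry $\alpha_1+m+n_1$ simplifies to $m+n+2-Q$, which is independent of $n_1$; interchanging the order of summation and applying hockey-stick to $\sum_{n_1=0}^{n+1-Q}\bincc{m-1+n_1}{m-1}$ collapses the coefficient to $\bincc{m+n+1-Q}{m}$, which reproduces the first sum of~(\mref{eq:co:01.01}) after renaming $Q$ as $j$.

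For $(j_1,j_2)=(0,1)$ the convention stated before Theorem~\mref{thm:01.01} truncates the $\alpha$-string to its single entry, so the zeta equals $\zeta(\alpha_1+m+n_1,1)=\zeta(m+n+1,1)$ for every admissible $n_1$; summing the coefficients $\bincc{m-1+n_1}{m-1}$ over $n_1\in\{0,\ldots,n\}$ by hockey-stick produces $\bincc{m+n}{m}\zeta(m+n+1,1)$. The second sum of Theorem~\mref{thm:01.01} is handled by the parallel change of variable $R:=\beta_2+1$: the first entry $\beta_1+m_1+n$ collapses to $m+n+2-R$, independent of $m_1$, and hockey-stick applied to $\sum_{m_1=0}^{m+1-R}\bincc{m_1+n-1}{n-1}$ yields $\bincc{m+n+1-R}{n}$, contributing $\sum_{R=2}^{m+1}\bincc{m+n+1-R}{n}\zeta(m+n+2-R,R)$. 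Combining this with the $(0,1)$ contribution as the $R=1$ term (using $\bincc{m+n}{m}=\bincc{m+n}{n}$) recovers the second sum of~(\mref{eq:co:01.01}).

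The main obstacle will be careful bookkeeping: one must verify that the index ranges after each change of variable are exactly those required for hockey-stick to apply cleanly, and one must correctly attribute the $(0,1)$ subcase to the $j=1$ term of the \emph{second} sum of~(\mref{eq:co:01.01}) rather than to the $Q=1$ term of the first sum. If this attribution is botched, the term $\zeta(m+n+1,1)$ will appear with the wrong coefficient, since it is the unique multiple zeta value that receives contributions from both sums of~(\mref{eq:co:01.01}) simultaneously.
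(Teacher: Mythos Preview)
Your proposal is correct and follows essentially the same route as the paper: specialize Theorem~\mref{thm:01.01} to $j=k=1$, separate the $(j_1,j_2)=(0,1)$ and $(1,0)$ subcases of the first sum, collapse the inner index via the hockey-stick identity (the paper's Eq.~(\mref{eq:pas})) after swapping the order of summation, and then merge the $(0,1)$ contribution $\bincc{m+n}{m}\zeta(m+n+1,1)$ as the missing $j=1$ term of the second sum of~(\mref{eq:co:01.01}). The variable names differ ($Q,R$ versus the paper's $\alpha_2,\beta_2$), but the structure and the one nontrivial attribution you flag are handled identically.
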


We further obtain the following formula for the product of two MZVs one of which has two strings of 1's.

\begin{theorem}For positive integers~$m,n,j,k,s$ and $t$,~we have
\allowdisplaybreaks{
{\small
\begin{eqnarray}
&&\zeta(m+1,{\{1\}}^{j-1})\zeta(n+1,{\{1\}}^{k-1},s+1, {\{1\}}^{t-1})\notag\\
&&\notag\\
&=&\hspace{-.8cm}\sum_{
\substack{|\alpha|=n-n_1+j_1+1\\
|\tilde{\alpha}|=s+j_3
\\ j_1+j_2+j_3+j_4=j
\\ 0\leq n_1 \leq n}} \hspace{-.7cm} \bincc{m-1+n_1}{m-1}\bincc{j_2+k-1}{k-1}\bincc{j_4+t-1}{t-1}\zeta(\alpha_1+m+n_1,\alpha_2,\cdots,\alpha_{j_1},\alpha_{j_1+1},{\{1\}}^{j_2+k-1},\tilde{\alpha}_1+1,\cdots,\tilde{\alpha}_{j_3},\tilde{\alpha}_{j_3+1},{\{1\}}^{j_4+t-1})\notag\\
&&+\sum_{
\substack{ 1\leq k_1 \leq k\\ 0\leq m_1\leq m-1}}\bincc{m_1+n-1}{n-1}\sum_{
\substack{|\beta|=m-m_1+k_1,|\tilde{\beta}|=s+j_2
\\ j_1+j_2+j_3=j}} \bincc{j_1+k-k_1}{k-k_1}\bincc{j_3+t-1}{t-1}\times\notag\\
&&\qquad \zeta(\beta_1+m_1+n,\beta_2,\cdots,\beta_{k_1},\beta_{k_1+1}+1,{\{1\}}^{j_1+k-k_1-1},\tilde{\beta}_1+1,\cdots,\tilde{\beta}_{j_2},\tilde{\beta}_{j_2+1},{\{1\}}^{j_3+t-1})\notag
\\
&&\notag\\
&&+\hspace{-.8cm}\sum_{
\substack{ 1\leq s_1 \leq s\\m_1+m_2+m_3=m-1}} \hspace{-.8cm} \bincc{m_1+n-1}{n-1}\bincc{m_3+s_1-1}{s_1-1}\sum_{
\substack{|\gamma|=m_2+k+1,|\tilde{\gamma}|=s-s_1+j_1+1
\\ j_1+j_2=j}} \bincc{j_2+t-1}{t-1}\times\notag\\
&&\qquad \zeta(\gamma_1+m_1+n,\gamma_2,\cdots,\gamma_{k},\gamma_{k+1}+\tilde{\gamma}_1+m_3+s_1,\tilde{\gamma}_2,\cdots,\tilde{\gamma}_{j_1},\tilde{\gamma}_{j_1+1},{\{1\}}^{j_2+t-1})\notag\\
&&\notag \\
&&+\hspace{-.8cm}\sum_{
\substack{ m_1+m_2+m_3+m_4=m-1
\\ 1\leq t_1 \leq t}} \hspace{-.8cm}\bincc{m_1+n-1}{n-1}\bincc{m_3+s-1}{s-1}\bincc{j+t-t_1}{j}\times\notag\\
&&\sum_{
\substack{|\delta|=m_2+k\\
|\tilde{\delta}|=m_4+t_1+1
}}
\zeta(\delta_1+m_1+n,\delta_2,\cdots,\delta_{k},m_3+s+\tilde{\delta}_1,\tilde{\delta}_2,\cdots,\tilde{\delta}_{t_1},\tilde{\delta}_{t_1+1}+1,{\{1\}}^{j+t-t_1-1}),\notag \end{eqnarray}
}}
where $j_i,m_i\geq 0$ and
\allowdisplaybreaks{\begin{eqnarray}
&&|\alpha|=\alpha_1+\cdots+\alpha_{j_1+1},|\tilde{\alpha}| =\tilde{\alpha}_1+\cdots+\tilde{\alpha}_{j_3+1} \text{ with }\alpha_i, \tilde{\alpha}_i\geq 1,\notag\\
&&|\beta|=\beta_1+\cdots+\beta_{{k_1}+1},|\tilde{\beta}| =\tilde{\beta}_1+\cdots+\tilde{\beta}_{{j_2}+1} \text{ with }\beta_i, \tilde{\beta}_i\geq 1,\notag\\
&&|\gamma|=\gamma_1+\cdots+\gamma_{k+1},|\tilde{\gamma}| =\tilde{\gamma}_1+\cdots+\tilde{\gamma}_{j_1+1} \text{ with }\gamma_i, \tilde{\gamma}_i\geq 1,\notag\\
&&|\delta|=\delta_1+\cdots+\delta_{k},|\tilde{\delta}| =\tilde{\delta}_1+\cdots+\tilde{\delta}_{{t_1}+1} \text{ with }\delta_i, \tilde{\delta}_i\geq 1.\notag
\end{eqnarray}}
\mlabel{thm:01.0101}
\end{theorem}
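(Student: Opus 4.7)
The plan is to express the product of the two MZVs as a shuffle product of words in $\{x_0,x_1\}$ and then evaluate that shuffle by repeated application of the recursive shuffle formula of Theorem~\mref{thm:rec}. Using the standard iterated-integral encoding
$$\zeta(s_1,\ldots,s_k)\;\longleftrightarrow\;x_0^{s_1-1}x_1\,x_0^{s_2-1}x_1\cdots x_0^{s_k-1}x_1,$$
the MZV $\zeta(m+1,\{1\}^{j-1})$ corresponds to $x_0^m x_1^j$, while $\zeta(n+1,\{1\}^{k-1},s+1,\{1\}^{t-1})$ corresponds to $x_0^n x_1^k x_0^s x_1^t$. Under the shuffle interpretation of the product, the left-hand side of Theorem~\mref{thm:01.0101} equals the MZV-image of the shuffle product of these two words.

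I would then classify the summands of this shuffle product according to the location, inside the second word $x_0^n x_1^k x_0^s x_1^t$, of the boundary between the $x_0^m$-part and the $x_1^j$-part of the first word. Since the second word consists of four monochromatic runs, this naturally produces exactly four cases: (i) the boundary lies inside or just after the $x_0^n$-run; (ii) inside the $x_1^k$-run; (iii) inside or just after the $x_0^s$-run; (iv) inside the $x_1^t$-run. These four cases correspond exactly to the four multi-sums on the right-hand side of the theorem. In each case, Theorem~\mref{thm:rec} is applied to expand the sub-shuffles in which the $x_0^m$-letters of the first word are distributed among the $x_0^n$- and $x_0^s$-runs, and the $x_1^j$-letters among the $x_1^k$- and $x_1^t$-runs lying to the right of the chosen boundary.

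Each application of Theorem~\mref{thm:rec} outputs binomial factors counting the number of ways to interleave same-letter runs of the two words, together with compositions that record how the letters of one word subdivide the runs of the other; these are exactly the binomials $\binom{m-1+n_1}{m-1}$, $\binom{j_2+k-1}{k-1}$, $\binom{m_3+s-1}{s-1}$, etc., and the compositions $\alpha,\tilde\alpha,\beta,\tilde\beta,\gamma,\tilde\gamma,\delta,\tilde\delta$ that appear in the statement. The last step converts each resulting word back to an MZV under the inverse encoding, yielding the displayed first entries $\alpha_1+m+n_1$, $\beta_1+m_1+n$, $\gamma_1+m_1+n$, $\delta_1+m_1+n$, together with the tail strings of 1's.

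The main obstacle is the combinatorial bookkeeping for the four-case decomposition. Because the second factor carries two strings of 1's, each $x_0$ of the first word can in principle be absorbed by either of two $x_0$-runs and each $x_1$ by either of two $x_1$-runs, so one must verify that the split indices $n_1$, $k_1$, $s_1$, $t_1$ used to pin down the boundary in the four cases exhaust all admissible insertions without double counting, and that the binomial coefficients emitted by Theorem~\mref{thm:rec} combine correctly with the composition summations in each case. Once this case analysis is set up cleanly, the translation from shuffled words back to MZVs is a routine re-indexing that reproduces the stated formula.
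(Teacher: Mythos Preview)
Your proposal is correct and follows essentially the same route as the paper: apply Theorem~\mref{thm:rec} to $x_0^m x_1^j\shap x_0^n x_1^k x_0^s x_1^t$ by fixing the position of the last $x_0$ of the first word (your ``boundary''), producing the four sums indexed by $n_1,k_1,s_1,t_1$, and then expand the resulting sub-shuffles via the auxiliary formulas obtained from further applications of Theorem~\mref{thm:rec} before applying $\zeta^{\shap}$. The only cosmetic difference is that the paper records the intermediate shuffle identities (for $x_1^m\shap x_0^n x_1^k x_0^s x_1^t$, $x_0^m\shap x_0^n x_1^k x_0^s x_1^t$, etc.) separately and then quotes them, whereas you describe doing this inline; the underlying computation is identical.
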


The method in this paper can be applied to give restricted decomposition formulas for MZVs with more stings of 1's, but the complexity of the formula increases quickly as the number of strings of 1's increases. See Remark~\mref{rk:0101.0101} for the case of the product of two MZVs both with two strings of 1's.

To prove the theorems, we first obtain a recursive formula for the shuffle product in Section~\mref{sec:rec}. The recursive formula is for words of special forms discussed in this paper and is different from the usual recursive formula for the shuffle product of arbitrary words. Then our recursive formulas are applied to obtain Theorem~\mref{thm:01.01} in Section~\mref{ss:01.01} and Theorem~\mref{thm:01.0101} in Section~\mref{ss:01.0101} respectively.

\section{A recursive formula for the shuffle product}
\mlabel{sec:rec}

Let $X$ be a nonempty set. Let $S(X)$ be the free semigroup on $X$. The shuffle product algebra on $X$, denoted by $\calh^{\shap}=\calh^{\shap}_X$, is the vector space $\QQ S(X)$ spanned by $S(X)$ together with the shuffle product $\shap$.
There are several equivalent descriptions of the shuffle product. We will use the following description in terms of order preserving maps. For details see~\cite{GX3} or \cite[\S~3.1.4]{Gub} for example.

Denote $[k]:=\{1,\cdots,k\}$ and $[k,\ell]=[k,\cdots, \ell]$ for $1\leq k<\ell$. For $m, n\geq 1$, denote
$$J_{m,n}:=\bigg\{ (\varphi, \psi)\,\bigg|\, \begin{array}{l} \varphi:[m]\to [m+n] \text{ and } \psi:[n]\to [m+n] \text{ are order preserving }\\ \text{and injective maps such that } \im\,\varphi \cup \im\, \psi = [m+n]\end{array} \bigg\}.$$
Note that the conditions for $(\varphi,\psi)$ implies that $\im\,\varphi \cap \im\,\psi=\emptyset$. Thus for each $i\in [m+n]$, either $\varphi(j)=i$ for some $j\in [m]$ or $\psi(j)=i$ for some $j\in [n]$, but not both.
For $(\varphi,\psi)\in J_{m,n}$, define

$$ a \shap_{\varphi,\psi}\, b:= c_1\cdots c_{m+n}, \text{ where }
c_i:=c_{\varphi,\psi,i}=\left\{\begin{array}{ll} a_j,& \text{if } i=\varphi(j)\text{ for } j\in [m]\\ b_j,& \text{if } i=\varphi(j) \text{ for } j\in [n] \end{array}\right ..
$$
We then have
\begin{equation}
a\shap b = \sum_{(\varphi,\psi)\in J_{m,n}} a \shap_{\varphi,\psi}\, b.
\mlabel{eq:sha}
\end{equation}

To describe the shuffle product of words of the forms $a=a_{m_1}\cdots a_{m_2}$ and $b=b_{n_1}\cdots b_{n_2}$, define the set
$$J_{[m_1,m_2],[n_1,n_2]}:=\left\{ (\varphi, \psi)\,\bigg|\, \begin{array}{l} \varphi:[m_1,m_2]\to [m_1+n_1-1,m_2+n_2]\\
\text{and } \psi:[n_1,n_2]\to [m_1+n_1-1,m_2+n_2] \text{ are order preserving}\\ \text{ and injective map such that } \im\,\varphi \cup \im\, \psi = [m_1+n_1-1,m_2+n_2]\end{array} \right\}.$$
For $(\varphi,\psi)\in J_{[m_1,m_2],[n_1,n_2]}$, define
$$ a \shap_{\varphi,\psi}\, b:= c_{m_1+n_1-1}\cdots c_{m+n}, \text{ where }
c_i:=c_{\varphi,\psi,i}=\left\{\begin{array}{ll} a_j,& \text{if } i=\varphi(j)\text{ for } j\in [m_1,m_2]\\ b_j,& \text{if } i=\varphi(j) \text{ for } j\in [n_1,n_2] \end{array}\right ..
$$
Then we have
\begin{equation}
a\shap b = \sum_{(\varphi,\psi)\in J_{[m_1,m_2],[n_1,n_2]}} a \shap_{\varphi,\psi}\, b.
\mlabel{eq:sha2}
\end{equation}

\begin{lemma}
Let $X$ be a nonempty set. Let $\calh^{\shap}$ be the shuffle product algebra on $X$ with the shuffle product $\shap$. Let $a=a_1\cdots a_m$ and $b=b_1\cdots b_n$ with $a_i, b_j\in X$, $1\leq i\leq m, 1\leq j\leq n$. For a fixed $k$ with $1\leq k\leq m$, we have
\begin{equation}
a\shap b= \sum_{i=0}^n ((a_1\cdots a_{k-1})\shap (b_1\cdots b_i))a_{k}((a_{k+1}\cdots a_m)\shap (b_{i+1}\cdots b_n)).
\mlabel{eq:sh}
\end{equation}
\mlabel{lem:pre}
\end{lemma}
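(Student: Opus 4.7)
The plan is to prove the identity by partitioning the shuffles of $a$ and $b$ according to the position that the distinguished letter $a_k$ occupies in the shuffled word. First I would invoke (\mref{eq:sha}) to write
$$a\shap b = \sum_{(\varphi,\psi)\in J_{m,n}} a\shap_{\varphi,\psi} b.$$
Since $\varphi:[m]\to[m+n]$ is order preserving and injective, exactly $k-1$ elements of $\im\,\varphi$ lie below $\varphi(k)$. Hence $\varphi(k)-k$ counts the elements of $\im\,\psi$ that lie in $[1,\varphi(k)-1]$, so there is a unique $i\in\{0,1,\ldots,n\}$ with $\varphi(k)=k+i$, and this $i$ is precisely the number of letters of $b$ placed before $a_k$ in $a\shap_{\varphi,\psi} b$.

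Next I would partition $J_{m,n}=\bigsqcup_{i=0}^{n}J_{m,n}^{(i)}$, where $J_{m,n}^{(i)}$ consists of those pairs with $\varphi(k)=k+i$, and exhibit a bijection
$$J_{m,n}^{(i)}\;\longleftrightarrow\; J_{k-1,i}\times J_{[k+1,m],[i+1,n]}.$$
For $(\varphi,\psi)\in J_{m,n}^{(i)}$ the restrictions $\varphi_1:=\varphi|_{[1,k-1]}$ and $\psi_1:=\psi|_{[1,i]}$ are order preserving injections whose images partition $[1,k+i-1]$, so $(\varphi_1,\psi_1)\in J_{k-1,i}$; likewise $\varphi_2:=\varphi|_{[k+1,m]}$ and $\psi_2:=\psi|_{[i+1,n]}$ have images partitioning $[k+i+1,m+n]$, which matches the codomain in (\mref{eq:sha2}) with $(m_1,m_2)=(k+1,m)$ and $(n_1,n_2)=(i+1,n)$. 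Conversely, any pair $\bigl((\varphi_1,\psi_1),(\varphi_2,\psi_2)\bigr)$ in the product set glues back uniquely to an element of $J_{m,n}^{(i)}$ by setting $\varphi(k)=k+i$ and taking the union of the two restrictions, so this is indeed a bijection.

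Finally, under this bijection the shuffled word splits at position $k+i$ as
$$a\shap_{\varphi,\psi} b \;=\; \bigl((a_1\cdots a_{k-1})\shap_{\varphi_1,\psi_1}(b_1\cdots b_i)\bigr)\,a_k\,\bigl((a_{k+1}\cdots a_m)\shap_{\varphi_2,\psi_2}(b_{i+1}\cdots b_n)\bigr),$$
because positions $[1,k+i-1]$ are filled by the left restriction, position $k+i$ holds $a_k$, and positions $[k+i+1,m+n]$ are filled by the right restriction. Summing over $i$ and over both factors in each fiber, then applying (\mref{eq:sha}) to the first factor and (\mref{eq:sha2}) to the second, yields (\mref{eq:sh}). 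The only real obstacle is bookkeeping on the index ranges to verify that the restriction and gluing produce bijections with exactly the offset $J$-sets appearing in (\mref{eq:sha}) and (\mref{eq:sha2}); once those ranges line up, concatenation of the letters does the rest.
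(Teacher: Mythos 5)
Your proposal is correct and follows essentially the same route as the paper's proof: partitioning $J_{m,n}$ according to the value $\varphi(k)=k+i$ and identifying each fiber with $J_{k-1,i}\times J_{[k+1,m],[i+1,n]}$ via restriction, so that the shuffled word splits as left factor, $a_k$, right factor. The extra detail you supply (the counting argument for why $\varphi(k)-k\in\{0,\ldots,n\}$ and the explicit gluing inverse) only makes explicit what the paper leaves implicit.
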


\begin{proof}
For the fixed $k\in [m]$ in the lemma and for a given $i\in [n]$, denote
$$J^i_{m,n}:=J^{k,i}_{m,n}:=\{(\varphi,\psi)\in J_{m,n}\,|\, \varphi(k)=i+k\}.$$
Then we have the disjoint union
$$
J_{m,n}=\bigsqcup^{n}_{i=0}J^{i}_{m,n}\,.
$$
Further, for $(\varphi,\psi)\in J^i_{m,n}$, we have
$$ \left\{\begin{array}{l}
\varphi|_{[k-1]}:[k-1]\rightarrow[i+k-1],\\
\varphi|_{[k+1,m]}:[k+1,m]\rightarrow[i+k+1,m+n],\\
\psi|_{[i]}:[i]\rightarrow[i+k-1],\\
\psi|_{[i+1,n]}:[i+1,n]\rightarrow[i+k+1,m+n].
\end{array} \right .
$$
Then we have
$(\varphi|_{[k-1]},\psi|_{[j]})\in J_{k-1,j}=J_{[k-1],[j]}$ and $(\varphi|_{[k+1,m]},\psi|_{[i+1,n]})\in J_{[k+1,m],[i+1,n]}$.
Hence
\begin{eqnarray*}
a\shap b&=&\sum_{(\phi,\varphi)\in J_{m,n}}{a\shap_{(\varphi,\psi)} b}\\
&=&\sum_{0\leq i\leq n}{\sum_{(\phi,\varphi)\in J^{i}_{m,n}}{a\shap_{(\varphi,\psi)} b}}\\
&=&\sum_{0\leq i\leq n}{((a_1\cdots a_{k-1})\shap (b_1\cdots b_i))a_{k}((a_{k+1}\cdots a_m)\shap( b_{i+1}\cdots b_n))}.
\end{eqnarray*}
This completes the proof.
\end {proof}
By the above lemma, we easily get

\begin{theorem}
Let $k, \ell\geq 1$. For $y_1,\cdots y_k, z_1,\cdots, z_\ell\in X$ and $m_1,\cdots, m_k, n_1,\cdots, n_\ell\in \ZZ_{\geq 1}$, we have
\begin{eqnarray*}
&& (y_1^{m_1}\cdots y_k^{m_k})\shap (z_1^{n_1}\cdots z_\ell^{n_\ell})\\
&=&\sum_{1\leq j\leq \ell }\sum_{\substack{ 1\leq n_{j_1}\leq n_j
 \\n_{j_1}+n_{j_2}=n_{j}} }(y_1^{m_1-1}\shap (z_1^{n_1}\cdots z_1^{n_{j_1}}))y_1\big((y_2^{m_2}\cdots y_k^{m_k})\shap (z_1^{n_{j_2}}\cdots z_\ell^{n_\ell})\big)+y_1^{m_1} \big((y_2^{m_2}\cdots y_k^{m_k})\shap (z_1^{n_{1}}\cdots z_\ell^{n_\ell})\big).
\end{eqnarray*}
\mlabel{thm:rec}
\end{theorem}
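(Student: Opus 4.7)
The plan is to derive Theorem~\ref{thm:rec} as a direct application of Lemma~\ref{lem:pre}, choosing the fixed splitting index in the lemma to be the position of the \emph{last} copy of $y_1$ in the left word, and then reindexing the resulting sum over $i$ according to which block of the right word is being cut. Concretely, I would set $a := y_1^{m_1} \cdots y_k^{m_k} = a_1 a_2 \cdots a_m$ with $m = m_1 + \cdots + m_k$, and $b := z_1^{n_1} \cdots z_\ell^{n_\ell} = b_1 \cdots b_n$ with $n = n_1 + \cdots + n_\ell$. Invoking Lemma~\ref{lem:pre} with the fixed index taken to be $m_1$ --- so that $a_{m_1} = y_1$, the prefix $a_1 \cdots a_{m_1-1}$ equals $y_1^{m_1-1}$, and the suffix $a_{m_1+1}\cdots a_m$ equals $y_2^{m_2}\cdots y_k^{m_k}$ --- yields the single sum
$$
a \shap b \;=\; \sum_{i=0}^{n} \bigl(y_1^{m_1 - 1} \shap (b_1 \cdots b_i)\bigr)\, y_1\, \bigl((y_2^{m_2} \cdots y_k^{m_k}) \shap (b_{i+1} \cdots b_n)\bigr).
$$

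Next I would isolate the $i = 0$ summand, which simplifies to $y_1^{m_1}\bigl((y_2^{m_2} \cdots y_k^{m_k}) \shap b\bigr)$ and accounts for the final term in the theorem. For the remaining range $1 \leq i \leq n$, I would reparametrize $i$ uniquely as $i = n_1 + \cdots + n_{j-1} + n_{j_1}$ with $1 \leq j \leq \ell$ and $1 \leq n_{j_1} \leq n_j$; the prefix and suffix of $b$ then take the grouped forms $b_1 \cdots b_i = z_1^{n_1} \cdots z_{j-1}^{n_{j-1}} z_j^{n_{j_1}}$ and $b_{i+1} \cdots b_n = z_j^{n_{j_2}} z_{j+1}^{n_{j+1}} \cdots z_\ell^{n_\ell}$, where $n_{j_2} := n_j - n_{j_1}$. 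Substituting these expressions into each summand converts the single sum over $i$ into the announced double sum over $j$ and $n_{j_1}$.

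The main work here is bookkeeping rather than any conceptual difficulty: one must check that the assignment $i \mapsto (j, n_{j_1})$ is a bijection from $\{1, \ldots, n\}$ onto $\{(j, n_{j_1}) : 1 \leq j \leq \ell,\ 1 \leq n_{j_1} \leq n_j\}$, and that the grouped descriptions of $b_1\cdots b_i$ and $b_{i+1}\cdots b_n$ really agree with the expressions appearing in the theorem's right-hand side. Both verifications are straightforward, so no substantive obstacle is expected beyond keeping the indexing of the $z$-blocks consistent throughout the substitution.
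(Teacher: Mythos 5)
Your proposal is correct and follows essentially the same route as the paper: the paper's own proof simply invokes Lemma~\ref{lem:pre} with the fixed index $k=m_1$ (the last $y_1$), identifies the $i=0$ summand with the final term, and leaves the reindexing of $1\leq i\leq n$ by pairs $(j,n_{j_1})$ implicit. Your write-up just makes that bijection and the grouped forms of $b_1\cdots b_i$ and $b_{i+1}\cdots b_n$ explicit, which is a faithful (indeed slightly more detailed) version of the same argument.
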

\begin{proof}
In Lemma~\mref{lem:pre} take $k=m_1$ so that $a_k$ is the last $y_1$ (from the left) in $y_1^{m_1}=\underbrace{y_1\cdots y_1}_{m_1 \text{ factors}}$. Then the theorem follows. The last term in the formula corresponds to the term when $i=0$ in the lemma.
\end{proof}

\section{Proofs of the main theorems}
\mlabel{sec:proof}

We prove our main theorems on restricted decompositions of MZVs in this section. After a brief general discussion in Section~\mref{ss:general},
we apply Theorem~\mref{thm:rec} to prove  Theorem~\mref{thm:01.01} in Section~\mref{ss:01.01} and to prove Theorem~\mref{thm:01.0101} in Section~\mref{ss:01.0101}. We also show that Euler's decomposition formula (Corollary~\mref{col:01.01}) can be derived from Theorem~\mref{thm:01.01} in Section~\mref{ss:co:01.01}.

\subsection{Decomposition formulas of MZVs}
\mlabel{ss:general}

As is well-known, an MZV has an integral representation~\mcite{LM}
\begin{equation}
\zeta(s_1,\cdots,s_k)=
 \int_0^1 \int_0^{t_1}\cdots \int_0^{t_{|\vec{s}|-1}} \frac{dt_1}{f_1(t_1)}
 \cdots \frac{dt_{|\vec{s}|}}{f_{|\vec{s}|}(t_{|\vec{s}|})}\,.
 \mlabel{eq:intrep}
\end{equation}
Here $|\vec{s}|=s_1+\cdots +s_k$ and
$$f_j(t)=\left\{\begin{array}{ll} 1-t_j, & j= s_1,s_1+s_2,\cdots, s_1+\cdots +s_k,\\
t_j, & \text{otherwise}. \end{array} \right .
$$
The MZVs span the following $\QQ$-subspace of $\RR$
$$
\mathbf{MZV}: = \QQ \{ \zeta(s_1,\cdots,s_k)\ |\ s_i\geq 1, s_1\geq 2\} \subseteq \RR.
$$

Consider
the set $X=\{x_0,x_1\}$. The shuffle product algebra $\calh^{\shap}=\calh^{\shap}_X$ in Section~\mref{sec:rec} contains the subalgebra
$
 \calh^{\shap}_0:=x_0\calh^{\shap} x_1.
$
Since the product of nested integrals like those in Eq.~(\mref{eq:intrep}) is governed by the shuffle product, the linear map
\begin{equation}
 \zeta^{\shap}: \calh^{\shap}_0 \to \mathbf{MZV},
 \quad x_0^{  s_1-1}  x_1  \cdots   x_0^{s_k-1}  x_1 \mapsto \zeta(s_1,\cdots,s_k)
\mlabel{eq:shmap}
\end{equation}
is an algebra homomorphism~\mcite{Ho1,IKZ}. In other words,

\begin{equation} \zeta^\shap(a)\zeta^\shap(b) = \zeta^{\shap}(a\shap b) \text{ for } a, b\in \calh^{\shap}_0.
\mlabel{eq:mzvsh}
\end{equation}
This gives a so called decomposition formula when the product $a\shap b$ on the right hand side is explicitly calculated even though a decomposition formula can also mean a formula derived from a process equivalent to the shuffle product, such as the one obtained in~\mcite{EW}. For example, when $a=x_0^mx_1$ and $b=x_0^nx_1$ this gives the Euler decomposition formula. In the general case, this leads to the generalized decomposition formula in~\mcite{GX3} which gets complicated quickly as the number of variable in the MZVs increases. For $a$ and $b$ is ``restricted" forms such as $a=x_0^mx_1^j$ and $b=x_0^nx_1^k$, it is possible to find simpler formulas for $a\shap b$, giving restricted decomposition formulas.

In this paper, we prove Theorem~\mref{thm:01.01} and Theorem~\mref{thm:01.0101} by applying Theorem~\mref{thm:rec} to recursively calculate shuffle product formulas for $x_0^mx_1^j\shap x_0^nx_1^k$ and $x_0^mx_1^j\shap x_0^nx_1^kx_0^sx_1^t$ respectively. We then apply Eq.~(\mref{eq:mzvsh}) to obtain equations of MZVs in the theorems.

\subsection{The proof of Theorem~\mref{thm:01.01}}
\mlabel{ss:01.01}
To prove Theorem~\mref{thm:01.01}, we first recall the following well-known shuffle product formulas
\begin{equation}
x^m_0\shap x^n_0=\bincc{m+n}{m}x^{m+n}_0
\mlabel{eq:sh0.0}
\end{equation}
and
\begin{equation}
x^m_1\shap x^n_1=\bincc{m+n}{m}x^{m+n}_1.
\mlabel{eq:sh1.1}
\end{equation}
We further have
\begin{equation}
x^m_0\shap x^n_1=\sum\limits_{ m_1+m_2+\cdots m_{n+1}=m,
m_i\geq 0}x^{m_1}_0x_1x^{m_2}_0x_1\cdots x^{m_{n}}_0x_1 x^{m_{n+1}}_0.
\mlabel{eq:sh0.1}
\end{equation}
where we denote the sum by $\frakB^{m}_{n+1}$. Thus we also have
\begin{equation}
x^m_1\shap x^n_0=\sum\limits_{ n_1+n_2+\cdots n_{m+1}=n,
n_i\geq 0}x^{n_1}_0x_1x^{n_2}_0x_1\cdots x^{n_{m}}_0x_1 x^{n_{m+1}}_0=\frakB^{n}_{m+1}.
\mlabel{eq:sh1.0}
\end{equation}
By Theorem~\mref{thm:rec}, Eqs.~(\mref{eq:sh1.1}) and (\mref{eq:sh1.0}), we obtain
\allowdisplaybreaks{\begin{eqnarray}
x^m_1\shap x^n_1x^k_0&=&\sum\limits_{ m_1+m_2=m,
m_i\geq 0}(x^{m_1}_1\shap x^{n-1}_1)x_1(x^{m_2}_1\shap  x^{k}_0)\mlabel{eq:sh1.10}
\\
&=&\sum\limits_{ m_1+m_2=m,
m_i\geq 0}\bincc{m_1+n-1}{n-1}x^{m_1+n}_1\frakB^{k}_{m_2+1}.\notag
\end{eqnarray}}
Likewise we have

\allowdisplaybreaks{\begin{eqnarray}
x^m_0\shap x^n_1x^k_0&=&\sum\limits_{ m_1+m_2=m,
m_i\geq 0}(x^{m_1}_0\shap x^{n}_1)x_0(x^{m_2}_0\shap  x^{k-1}_0)\mlabel{eq:sh0.10}
\\
&=&\sum\limits_{ m_1+m_2=m,
m_i\geq 0}\bincc{m_2+k-1}{k-1}\frakB^{m_1}_{n+1}x^{m_2+k}_0,\notag
\end{eqnarray}}

\allowdisplaybreaks{\begin{eqnarray}
x^m_1\shap x^n_0x^k_1&=&\sum\limits_{ m_1+m_2=m,
m_i\geq 0}(x^{m_1}_1\shap x^{n}_0)x_1(x^{m_2}_1\shap  x^{k-1}_1)\mlabel{eq:sh1.01}
\\
&=&\sum\limits_{ m_1+m_2=m,
m_i\geq 0}\bincc{m_2+k-1}{k-1}\frakB^{n}_{m_1+1}x^{m_2+k}_1\notag
\end{eqnarray}}
and

\allowdisplaybreaks{\begin{eqnarray}
x^m_0\shap x^n_0x^k_1&=&\sum\limits_{ m_1+m_2=m,
m_i\geq 0}(x^{m_1}_0\shap x^{n-1}_0)x_0(x^{m_2}_0\shap  x^{k}_1)\mlabel{eq:sh0.01}
\\
&=&\sum\limits_{ m_1+m_2=m,
m_i\geq 0}\bincc{m_1+n-1}{n-1}x^{m_1+n}_0\frakB^{m_2}_{k+1}.\notag
\end{eqnarray}}

By Theorem~\mref{thm:rec}, Eqs.~(\mref{eq:sh1.01}), (\mref{eq:sh0.1}) and (\mref{eq:sh1.0}), we obtain
\allowdisplaybreaks{\begin{eqnarray}
x^m_1\shap x^n_1x^k_0x^s_1
&=& \sum_{m_1+m_1'=m,m_1,m_1'\geq 0} (x_1^{m_1}\shap x_1^{n-1})x_1\big(x_1^{m_1'}\shap (x_0^kx_1^s)\big) \mlabel{eq:sh1.101}\\
&=& \sum_{m_1+m_1'=m,m_1,m_1'\geq 0} (x_1^{m_1}\shap x_1^{n-1})x_1\left(\sum_{m_2+m_3=m_1', m_2, m_3\geq 0} (x_1^{m_2}\shap x_0^{k})x_1 (x_1^{m_3}\shap x_1^{s-1})\right)\notag\\
&=&\sum\limits_{ m_1+m_2+m_3=m,
m_i\geq 0}(x^{m_1}_1\shap x^{n-1}_1)x_1(x^{m_2}_1\shap  x_0^{k})x_1(x^{m_3}_1\shap x^{s-1}_1)\notag\\
&=&\sum\limits_{ m_1+m_2+m_3=m,
m_i\geq 0}\bincc{m_1+n-1}{n-1}\bincc{m_3+s-1}{s-1}x^{m_1+n}_1 \frakB^{k}_{m_2+1}x^{m_3+s}_1.\notag
\end{eqnarray}}

By a similar argument, we obtain the following Eqs.~(\mref{eq:sh0.010}) -- (\mref{eq:sh0.101}).
\allowdisplaybreaks{\begin{eqnarray}
x^m_0\shap x^n_0x^k_1x^s_0&=&\sum\limits_{ m_1+m_2+m_3=m,
m_i\geq 0}(x^{m_1}_0\shap x^{n-1}_0)x_0(x^{m_2}_0\shap  x^{k}_1)x_0(x^{m_3}_0\shap x^{s-1}_0)\mlabel{eq:sh0.010}
\\
&=&\sum\limits_{ m_1+m_2+m_3=m,
m_i\geq 0}\bincc{m_1+n-1}{n-1}\bincc{m_3+s-1}{s-1} x^{m_1+n}_0\frakB^{m_2}_{k+1}x^{m_3+s}_0.\notag
\end{eqnarray}}

\allowdisplaybreaks{\begin{eqnarray}
x^m_1\shap x^n_0x^k_1x^s_0&=&\sum\limits_{ m_1+m_2+m_3=m,
m_i\geq 0}(x^{m_1}_1\shap x^{n-1}_0)x_0(x^{m_2}_1\shap  x^{k-1}_1)x_1(x^{m_3}_1\shap x^{s}_0)\mlabel{eq:sh1.010}\\
&=&\sum\limits_{ m_1+m_2+m_3=m,
m_i\geq 0}\bincc{m_2+k-1}{k-1}\frakB^{n-1}_{m_1+1} x_0x^{m_2+k}_1\frakB^{s}_{m_3+1}.\notag
\end{eqnarray}}

\allowdisplaybreaks{\begin{eqnarray}
x^m_0\shap x^n_1x^k_0x^s_1&=&\sum\limits_{ m_1+m_2+m_3=m,
m_i\geq 0}(x^{m_1}_0\shap x^{n-1}_1)x_1(x^{m_2}_0\shap  x^{k-1}_0)x_0(x^{m_3}_0\shap x^{s}_1)\mlabel{eq:sh0.101}
\\
&=&\sum\limits_{ m_1+m_2+m_3=m, m_i\geq 0}\bincc{m_2+k-1}{k-1}\frakB^{m_1}_{n}x_1x^{m_2+k}_0\frakB^{m_3}_{s+1}.\notag
\end{eqnarray}}

By Theorem~\mref{thm:rec} and Eq.~(\mref{eq:sh0.101}), we further obtain
\allowdisplaybreaks{\begin{eqnarray}
&&x^m_0\shap x^n_0x^k_1x^s_0x^t_1\mlabel{eq:sh0.0101}
\\
=&&\sum\limits_{ m_1+m_2+m_3+m_4=m,
m_i\geq 0}(x^{m_1}_0\shap x^{n-1}_0)x_0(x^{m_2}_0\shap  x^{k-1}_1)x_1(x^{m_3}_0\shap x^{s-1}_0)x_0(x^{m_4}_0\shap  x^{t}_1)\notag\\
=&&\sum\limits_{ m_1+m_2+m_3+m_4=m,
m_i\geq 0}\bincc{m_1+n-1}{n-1}\bincc{m_3+s-1}{s-1}x^{m_1+n}_0\frakB^{m_2}_{k}x_1x^{m_3+s}_0\frakB^{m_4}_{t+1}.\notag
\end{eqnarray}}
We similarly have
\allowdisplaybreaks{\begin{eqnarray}
&&x^m_1\shap x^n_0x^k_1x^s_0x^t_1\mlabel{eq:sh1.0101}
\\
=&&\sum\limits_{ m_1+m_2+m_3+m_4=m,
m_i\geq 0}(x^{m_1}_1\shap x^{n}_0)x_1(x^{m_2}_1\shap  x^{k-1}_1)x_0(x^{m_3}_1\shap x^{s-1}_0)x_1(x^{m_4}_1\shap  x^{t-1}_1)\notag\\
=&&\sum\limits_{ m_1+m_2+m_3+m_4=m,
m_i\geq 0}\bincc{m_2+k-1}{k-1}\bincc{m_4+t-1}{t-1}\frakB^{n}_{m_1+1}x^{m_2+k}_1x_0\frakB^{s-1}_{m_3+1}x^{m_4+t}_1.\notag
\end{eqnarray}}

Using Theorem~\mref{thm:rec} and then Eqs.~(\mref{eq:sh0.0}), (\mref{eq:sh1.01}), (\mref{eq:sh0.01}) and (\mref{eq:sh1.1}), we derive

\allowdisplaybreaks{\begin{eqnarray}
&&x^m_0x^j_1\shap x^n_0x^k_1\mlabel{eq:sh01.01}
\\
&=&\sum\limits_{ 0\leq n_1\leq n}(x^{m-1}_0\shap x^{n_1}_0)x_0(x^{j}_1\shap x^{n-n_1}_0  x^{k}_1)+\sum\limits_{ 1\leq k_1\leq k}(x^{m-1}_0\shap x^{n}_0x^{k_1}_1)x_0(x^j_1\shap  x^{k-k_1}_1)\notag\\
&=&\sum\limits_{ 0\leq n_1\leq n}\bincc{m-1+n_1}{m-1}x^{m+n_1}_0\sum\limits_{ j_1+j_2=j,
j_i\geq 0}\bincc{j_2+k-1}{k-1}\frakB^{n-n_1}_{j_1+1}x^{j_2+k}_1\notag\\
&&+\sum\limits_{ 1\leq k_1\leq k}\sum\limits_{m_1+m_2=m-1,
m_i\geq 0}\bincc{m_1+n-1}{n-1}x^{m_1+n}_0 \frakB^{m_2}_{k_1+1}\bincc{j+k-k_1}{j}x_0x^{j+k-k_1}_1\notag
\\
&=&\sum\limits_{ 0\leq n_1\leq n}\bincc{m-1+n_1}{m-1}\sum\limits_{ j_1+j_2=j,
j_i\geq 0}\bincc{j_2+k-1}{k-1}\sum\limits_{ \alpha_1+\cdots +\alpha_{j_1+1}=n-n_1,
\alpha_i\geq 0}x_0^{\alpha_1+m+n_1}x_1x_0^{\alpha_2}x_1\cdots x_0^{\alpha_{j_1+1}}x_1^{j_2+k}\notag\\
&&+\sum\limits_{ 1\leq k_1\leq k}\sum\limits_{ m_1+m_2=m-1,
m_i\geq 0}\bincc{m_1+n-1}{n-1}\sum\limits_{ \beta_1+\cdots +\beta_{k_1+1}=m_2,
\beta_i\geq 0}\bincc{j+k-k_1}{j} x_0^{\beta_1+m_1+n}x_1x_0^{\beta_2}\cdots x_0^{\beta_{k_1}}x_1 x_0^{\beta_{k_1+1}+1}x_1^{j+k-k_1}.\notag
\end{eqnarray}}
Applying the algebra homomorphism $\zeta^\shap$ in Eq.~(\mref{eq:shmap}) to both sides of the above equation. By Eq.~(\mref{eq:mzvsh}), the left hand side becomes $\zeta(m+1,\{1\}^{j-1})\zeta(n+1,\{1\}^{k-1})$. The right hand side becomes the right hand side of the equation in Theorem~\mref{thm:01.01}. This proves Theorem~\mref{thm:01.01}.

\subsection{The proof of Corollary~\mref{col:01.01}}
\mlabel{ss:co:01.01}

We now derive Euler's decomposition formula, namely Corollary~\mref{col:01.01}, from Theorem~\mref{thm:01.01}. We recall the formula
\begin{equation}
\sum_{s=0}^k \bincc{m-1+s}{m-1}=\bincc{m+k}{m} \text{ or }
\sum_{t=k}^n \bincc{m-1+n-t}{m-1}=\bincc{m+n-k}{m}, \quad m, k\geq 0, 0\leq k\leq n,
\mlabel{eq:pas}
\end{equation}
which can be proved from the Pascal's rule by an induction.

Taking $j=k=1$ in Theorem~\mref{thm:01.01} and writing the case when $j_1=0$ in the first sum separately, we obtain
\allowdisplaybreaks{\begin{eqnarray}
&&\zeta(m+1)\zeta(n+1)\mlabel{eq:sh11}\\
&=&\sum_{
\substack{ 0\leq n_1 \leq n}} \bincc{m-1+n_1}{m-1}\zeta(m+n+1,1)+\sum_{
\substack{ 0\leq n_1 \leq n,|\alpha|=n-n_1+2},\alpha_i\geq 1} \bincc{m-1+n_1}{m-1}\zeta(\alpha_1+m+n_1,\alpha_2)\notag
\\
&&+\sum_{
\substack{|\beta|=m_2+2,m_1+m_2=m-1,m_i\geq 0
}} \bincc{m_1+n-1}{n-1}\zeta(\beta_1+m_1+n,\beta_2+1).\notag
\end{eqnarray}}
The second sum in Eq.~(\mref{eq:sh11}) equals
\allowdisplaybreaks{\begin{eqnarray}
&&\sum_{
\substack{ 0\leq n_1 \leq n}} \bincc{m-1+n_1}{m-1}\sum_{
\substack{ |\alpha|=n-n_1+2,\alpha_i\geq 1}} \zeta(\alpha_1+m+n_1,\alpha_2)\notag\\
&=&\sum_{n_1=0}^{n}
\bincc{m+n_1-1}{m-1}\sum_{\alpha_2=1}^{
n-n_1+1}\zeta(m+n+2-\alpha_2,\alpha_2)\notag
\end{eqnarray}}
Taking $t:=n-n_1$, exchanging the order of the summations and applying Eq.~(\mref{eq:pas}), we obtain
\allowdisplaybreaks{\begin{eqnarray}
&&\sum_{t=1}^{n}
\bincc{m+n-t-1}{m-1}\sum_{\alpha_2=1}^{
t+1}\zeta(m+n+2-\alpha_2,\alpha_2)\notag\\
&=&\sum_{\alpha_2=1}^{n+1}
\sum_{t=\alpha_2-1}^{n}\bincc{m+n-t-1}{m-1}\zeta(m+n+2-\alpha_2,\alpha_2)\notag\\
&=&\sum_{\alpha_2=1}^{n+1}\bincc{m+n-\alpha_2+1}{m}\zeta(m+n+2-\alpha_2,\alpha_2).\notag
\end{eqnarray}}
This is the first sum in Eq.~(\mref{eq:co:01.01}).

Applying the same argument to the third sum in Eq.~(\mref{eq:sh11}), we derive
\allowdisplaybreaks{\begin{eqnarray}
&&\sum_{
\substack{ m_1+m_2=m-1,m_i\geq 0}} \bincc{m_1+n-1}{n-1}\sum_{\substack{
|\beta|=m_2+2,\beta_i\geq 1}}\zeta(\beta_1+m_1+n,\beta_2+1)\notag\\
&=&\sum_{m_1=0}^{m-1}
\bincc{m_1+n-1}{n-1}\sum_{\beta_2=1}^{
m-m_1}\zeta(m+n+1-\beta_2,\beta_2+1)\notag\\
&=&\sum_{k=1}^{m}
\bincc{m+n-k-1}{n-1}\sum_{\beta_2=2}^{
k+1}\zeta(m+n+2-\beta_2,\beta_2)\notag\\
&=&\sum_{\beta_2=2}^{
m+1}\sum_{k=\beta_2-1}^{m}
\bincc{m+n-k-1}{n-1}\zeta(m+n+2-\beta_2,\beta_2)\notag\\
&=&\sum_{\beta_2=2}^{
m+1}
\bincc{m+n-\beta_2+1}{n}\zeta(m+n+2-\beta_2,\beta_2).\notag
\end{eqnarray}}

By Eq.~(\mref{eq:pas}), the first sum in Eq.~(\mref{eq:sh11}) is
$\bincc{m+n}{m}\zeta(m+n+1,1)$. Combining it with the above sum, we see that the first sum and the third sum in Eq.~(\mref{eq:sh11}) give

$$\sum_{\beta_2=1}^{
m+1}
\bincc{m+n-\beta_2+1}{n}\zeta(m+n+2-\beta_2,\beta_2).
$$
This is the second sum in Eq.~(\mref{eq:co:01.01}). Thus Corollary~\mref{col:01.01} is proved.

\subsection{The proof of Theorem~\mref{thm:01.0101}}
\mlabel{ss:01.0101}
We finally prove Theorem~\mref{thm:01.0101}.

Applying Theorem~\mref{thm:rec} and then Eqs.~(\mref{eq:sh0.0}), (\mref{eq:sh1.0101}), (\mref{eq:sh0.01}), (\mref{eq:sh1.101}), (\mref{eq:sh0.010}), (\mref{eq:sh1.01}), (\mref{eq:sh0.0101}) and (\mref{eq:sh1.1}), we obtain
\allowdisplaybreaks{\begin{eqnarray}
&&x^m_0x^j_1\shap x^n_0x^k_1x^s_0x^t_1 \notag
\\
=&&\sum\limits_{ 0\leq n_1\leq n}(x^{m-1}_0\shap x^{n_1}_0)x_0(x^{j}_1\shap x^{n-n_1}_0  x^{k}_1x^s_0x^t_1)+\sum\limits_{ 1\leq k_1\leq k}(x^{m-1}_0\shap x^{n}_0x^{k_1}_1)x_0(x^j_1\shap  x^{k-k_1}_1x^s_0x^t_1)\notag\\
&+&\sum\limits_{ 1\leq s_1\leq s}(x^{m-1}_0\shap x^{n}_0x^{k}_1x^{s_1}_0)x_0(x^j_1\shap  x^{s-s_1}_0x^t_1)+\sum\limits_{ 1\leq t_1\leq t}(x^{m-1}_0\shap x^{n}_0x^{k}_1x^s_0x^{t_1}_1)x_0(x^j_1\shap  x^{t-t_1}_1)\notag\\
=&&\sum\limits_{ 0\leq n_1\leq n}\bincc{m-1+n_1}{m-1}x^{m+n_1}_0\sum\limits_{ j_1+j_2+j_3+j_4=j,
j_i\geq 0}\bincc{j_2+k-1}{k-1}\bincc{j_4+t-1}{t-1}\frakB^{n-n_1}_{j_1+1}x^{j_2+k}_1x_0\frakB^{s-1}_{j_3+1}x^{j_4+t}_1\notag\\
&+&\sum\limits_{ 1\leq k_1\leq k}\sum\limits_{ m_1+m_2=m-1,
m_i\geq 0}\bincc{m_1+n-1}{n-1}x^{m_1+n}_0\frakB^{m_2}_{k_1+1}x_0\sum\limits_{ j_1+j_2+j_3=j,
j_i\geq 0}\bincc{j_1+k-k_1}{k-k_1}\bincc{j_3+t-1}{t-1}x^{j_1+k-k_1}_1x_0\frakB^{s-1}_{j_2+1}x^{j_3+t}_1\notag\\
&+&\sum\limits_{ 1\leq s_1\leq s}\sum\limits_{ m_1+m_2+m_3=m-1,
m_i\geq 0}\bincc{m_1+n-1}{n-1}\bincc{m_3+s_1-1}{s_1-1}x^{m_1+n}_0\frakB^{m_2}_{k+1}x^{m_3+s_1+1}_0\sum\limits_{j_1+j_2=j,
j_i\geq 0}\bincc{j_2+t-1}{t-1}\frakB^{s-s_1}_{j_1+1}x^{j_2+t}_1\notag\\
&+&\sum\limits_{ 1\leq t_1\leq t}\sum\limits_{ m_1+m_2+m_3+m_4=m-1,
m_i\geq 0 }\bincc{m_1+n-1}{n-1}\bincc{m_3+s-1}{s-1}\bincc{j+t-t_1}{j}x^{m_1+n}_0\frakB^{m_2}_{k}x_1x^{m_3+s}_0\frakB^{m_4}_{t_1+1}x_0x^{j+t-t_1}_{1}\notag
\\
=&&\sum\limits_{ 0\leq n_1\leq n}\bincc{m-1+n_1}{m-1}\sum\limits_{ j_1+j_2+j_3+j_4=j,
j_i\geq 0}\bincc{j_2+k-1}{k-1}\bincc{j_4+t-1}{t-1}\sum\limits_{ \alpha_1+\cdots +\alpha_{j_1+1}=n-n_1,
\alpha_i\geq 0}\sum\limits_{\tilde{\alpha}_1+\cdots +\tilde{\alpha}_{j_3+1}=s-1,
\tilde{\alpha}_i\geq 0}\notag\\
&&x_0^{\alpha_1+m+n_1}x_1x_0^{\alpha_2}x_1\cdots x_0^{\alpha_{j_1}}x_1x_0^{\alpha_{j_1+1}}x_1^{j_2+k} x_0^{\tilde{\alpha}_1+1}x_1\cdots x_0^{\tilde{\alpha}_{j_3}}x_1 x_0^{\tilde{\alpha}_{j_3+1}}x_1^{j_4+t}\notag\\
&+&\sum\limits_{ 1\leq k_1\leq k}\sum\limits_{ m_1+m_2=m-1,
m_i\geq 0}\bincc{m_1+n-1}{n-1}\sum\limits_{ \beta_1+\cdots +\beta_{{k_1}+1}=m_2,
\beta_i\geq 0}\sum\limits_{ j_1+j_2+j_3=j,
j_i\geq 0}\bincc{j_1+k-k_1}{k-k_1}\bincc{j_3+t-1}{t-1} \sum\limits_{\tilde{\beta}_1+\cdots +\tilde{\beta}_{j_2+1}=s-1,
\tilde{\beta}_i\geq 0}\notag\\
&&x_0^{\beta_1+m_1+n}x_1x_0^{\beta_2}x_1\cdots x_0^{\beta_{k_1}}x_1 x_0^{\beta_{k_1+1}+1}x_1^{j_1+k-k_1}x_0^{\tilde{\beta}_1+1}x_1\cdots x_0^{\tilde{\beta}_{j_2}}x_1x_0^{\tilde{\beta}_{j_2+1}}x_1^{j_3+t}\notag\\
&+&\sum\limits_{ 1\leq s_1\leq s}\sum\limits_{ m_1+m_2+m_3=m-1,
m_i\geq 0}\bincc{m_1+n-1}{n-1}\bincc{m_3+s_1-1}{s_1-1}
\sum\limits_{ \gamma_1+\cdots +\gamma_{k+1}=m_2,
\gamma_i\geq 0}\sum\limits_{j_1+j_2=j,
j_i\geq 0}\bincc{j_2+t-1}{t-1}\sum\limits_{\tilde{\gamma}_1+\cdots +\tilde{\gamma}_{j_1+1}=s-s_1,
\tilde{\gamma}\geq 0}\notag\\
&&x_0^{\gamma_1+m_1+n}x_1x_0^{\gamma_2}x_1 \cdots x_0^{\gamma_{k}}x_1 x_0^{\gamma_{k+1}+\tilde{\gamma}_1+m_3+s_1+1}x_1 x_0^{\tilde{\gamma}_2}x_1\cdots x_0^{\tilde{\gamma}_{j_1}}x_1 x_0^{\tilde{\gamma}_{j_1+1}}x_1^{j_2+t}\notag\\
&+&\sum\limits_{ 1\leq t_1\leq t}\sum\limits_{ m_1+m_2+m_3+m_4=m-1,
m_i\geq 0}\bincc{m_1+n-1}{n-1}\bincc{m_3+s-1}{s-1}\bincc{j+t-t_1}{j}\sum\limits_{ \delta_1+\cdots +\delta_{k}=m_2,
\delta_i\geq 0}\sum\limits_{\tilde{\delta}_1+\cdots +\tilde{\delta}_{t_1+1}=m_4,
\tilde{\delta}_i\geq 0}\notag\\
&&x_0^{\delta_1+m_1+n}x_1x_0^{\delta_2}x_1\cdots x_0^{\delta_{k}}x_1x_0^{m_3+s}x_0^{\tilde{\delta}_1}x_1 x_0^{\tilde{\delta}_2}x_1\cdots x_0^{\tilde{\delta}_{t_1}}x_1x_0^{\tilde{\delta}_{t_1+1}+1}x_1^{j+t-t_1}.\notag \end{eqnarray}}
Applying the algebra homomorphism $\zeta^\shap$ in Eq.~(\mref{eq:shmap}) to both sides of the above equation. By Eq.~(\mref{eq:mzvsh}), the left hand side becomes $\zeta(m+1,\{1\}^{j-1})\zeta(n+1,\{1\}^{k-1},s+1,\{1\}^{t-1})$. The right hand side becomes the right hand side in the equation in Theorem~\mref{thm:01.0101}. This proves Theorem~\mref{thm:01.0101}.
\smallskip

\begin{remark}
As noted in the introduction, our method in this paper in principle can be applied to derive a restricted decomposition formula for a product of two MZVs with any numbers of strings of 1's. But the formula becomes complicated quickly. For example, to derive the formula for a product of two MZVs both with two strings of 1's, we computed $x_0^mx_1^jx_0^rx_1^\ell\shap x_0^nx_1^kx_0^sx_1^t$ and obtained 20 nested sums. So we do not present the formula here. 
 \mlabel{rk:0101.0101}
\end{remark}
\smallskip

\noindent {\bf Acknowledgements}: This work is supported by the National Natural Science Foundation of China (Grant No. 11371178) and the National Science Foundation of US (Grant No. DMS~1001855).

\end{document}